\title{Correspondence between 2 Calabi-Yau Categories and Quivers}
\author{Jie Ren}
\begin{document}
\maketitle

\newtheorem*{thm}{Theorem}
\newtheorem*{rem}{Remark}

\def\dim{{\rm dim}}

In Section 8 of \cite{KoSo1}, M. Kontsevich and Y. Soibelman proved
that the equivalence classes of a certain type of 3-dimensional
Calabi-Yau categories are in one-to-one correspondence with the
gauge equivalence classes of quiver with minimal potential $(Q, W)$.
This note gives an analogue in 2-dimensional Calabi-Yau case. We
assume that \textbf{k} is a field of characteristic zero.

\begin{thm}
Let $\mathscr{C}$ be a 2-dimensional
\textbf{k}-linear Calabi-Yau category generated by a finite
collection $\mathcal {E}=\{E_{i}\}_{i\in I}$ of generators
satisfying
\begin{itemize}
\item[$\bullet$]$Ext^{0}(E_{i},E_{i})=\textbf{k} \cdot id_{E_{i}}$,
\item[$\bullet$]$Ext^{0}(E_{i},E_{j})=0, \forall i\neq j$,
\item[$\bullet$]$Ext^{<0}(E_{i},E_{j})=0, \forall i, j$.
\end{itemize}
The equivalence classes of such categories with respect to
$A_{\infty}$-transformations preserving the Calabi-Yau structure and
$\mathcal {E}$, are in one-to-one correspondence with finite
symmetric quivers with even number of loops at each vertex.
\end{thm}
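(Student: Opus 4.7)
The plan is to transfer the entire structure to cohomology via a minimal model, read off the quiver from $Ext^{1}$ using the Calabi--Yau pairing, and classify the resulting cyclic $A_{\infty}$-structures. First I would replace $\mathscr{C}$ by its minimal $A_{\infty}$-model on the full subcategory generated by $\mathcal{E}$. Kadeishvili's theorem supplies a minimal $A_{\infty}$-structure $(m_{n})_{n\geq 2}$ on the graded $\textbf{k}$-algebra $A:=\bigoplus_{i,j\in I} Ext^{\ast}(E_{i},E_{j})$, and the $2$-CY structure descends along the transfer to a non-degenerate cyclically invariant pairing of degree $-2$ on $A$.

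Second, using the CY pairing together with the hypotheses on $Ext^{0}$ and $Ext^{<0}$, one has $Ext^{2}(E_{i},E_{j})\cong Ext^{0}(E_{j},E_{i})^{\ast}=\textbf{k}^{\delta_{ij}}$, so $A$ is concentrated in degrees $0,1,2$ with $A^{0}=A^{2}=\textbf{k}^{I}$. I would define $Q$ by $Q_{0}:=I$ and by declaring the arrows $i\to j$ to be a basis of $Ext^{1}(E_{i},E_{j})$. The non-degenerate pairing $Ext^{1}(E_{i},E_{j})\otimes Ext^{1}(E_{j},E_{i})\to \textbf{k}$ puts arrows $i\to j$ in bijection with arrows $j\to i$, so $Q$ is symmetric. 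Its restriction to $Ext^{1}(E_{i},E_{i})$ carries a cyclic symmetry sign $(-1)^{1\cdot 1}=-1$, hence is a non-degenerate skew-symmetric (i.e.\ symplectic) form, forcing the number of loops at every vertex to be even.

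Third, for the converse I would start with any symmetric finite quiver $Q$ with an even number of loops at each vertex and realise the desired category as the derived 2-Calabi--Yau completion (in the sense of Keller) of the semisimple algebra $\textbf{k}^{I}$ along the bimodule $\bigoplus_{a\in Q_{1}}\textbf{k}\cdot a$; this is a canonical 2-CY construction whose $Ext$-algebra reproduces $A$, so the forward map inverts it. The main technical obstacle is to prove that any two cyclic $A_{\infty}$-structures on $A$ yielding the same quiver are conjugate by a CY-preserving $A_{\infty}$-automorphism. I would treat this by inductive obstruction theory in the cyclic Hochschild cohomology $HC^{\bullet}(A)$: at stage $n$ the obstruction lies in a graded component with internal degree pinned down by $|m_{n}|=2-n$ together with cyclic invariance, and the concentration of $A$ in degrees $0,1,2$ kills every such class. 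This is precisely where the 2-CY case diverges from the 3-CY case of \cite{KoSo1}: in dimension three the analogous cohomology produces an essential new parameter---the minimal potential $W$---whereas in dimension two it vanishes, so the symmetric quiver alone is a complete invariant.
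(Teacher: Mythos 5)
Your construction of $\Phi$ (minimal model, arrows from a basis of $Ext^{1}(E_{i},E_{j})$, symmetry from the degree $-2$ pairing, evenness of loop numbers from the induced symplectic form on $Ext^{1}(E_{i},E_{i})$) is the same as the paper's. For surjectivity you take a genuinely different route: the paper writes down the explicit canonical potential $W_{can}=\alpha^{2}\beta+\sum_{i}(\alpha x_{i}\xi_{i}-\alpha\xi_{i}x_{i})$ and verifies $\{W_{can},W_{can}\}=0$ directly, whereas you invoke Keller's $2$-Calabi--Yau completion. That can be made to work, but not quite as stated: the completion of $\textbf{k}^{I}$ ``along the arrow bimodule'' is just the path algebra, which is not $2$-CY; you need to choose a half-quiver $Q'$ with double $\overline{Q'}=Q$ (possible exactly because $Q$ is symmetric with evenly many loops at each vertex) and take the derived preprojective algebra $\Pi_{2}(\textbf{k}Q')$, whose simples then have the required $Ext$-algebra.

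The genuine gap is in injectivity. It is false that concentration of $A$ in degrees $0,1,2$ together with $|m_{n}|=2-n$ kills all higher cyclic tensors: the components $m_{n}\colon Ext^{1}\otimes\cdots\otimes Ext^{1}\to Ext^{2}$ land in degree $n\cdot 1+(2-n)=2$ and are admissible for every $n\geq 2$. In the paper's coordinates these are precisely the cyclic words $\alpha\cdot w(x_{i},\xi_{i})$ of cohomological degree $1$ and arbitrary length --- the exact $2$-CY analogue of the $3$-CY superpotential --- so your degree count does not close the argument at any stage. To eliminate them you must either (a) show that the cyclic minimal model can be chosen strictly unital, so that cyclic invariance rotates the $Ext^{0}$-slot into an input of $m_{n}$ and unitality kills it for $n\geq 3$ (a nontrivial compatibility you do not address), or (b) do what the paper does: prove these deformations are gauge-trivial by computing $H^{1}$ of the deformation DGLA $\mathfrak{g}_{can}$ with differential $\{W_{can},\bullet\}$, using the splitting $HC_{\bullet}(A_{can})\simeq HC_{\bullet}(A_{can}^{+})\oplus HC_{\bullet}(\textbf{k})$, the observation that cyclic series in $x_{i},\xi_{i},\beta$ have non-positive cohomological degree, and $\{W_{can},\alpha\}=-\alpha^{2}$. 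Note also that in general these terms are only gauge-equivalent to zero, not zero, so the vanishing you assert is not even the right statement to aim for. You have correctly located where dimension $2$ diverges from the $3$-CY case of \cite{KoSo1}, but the mechanism that removes the would-be potential is a cohomology computation, not a degree constraint; as written, the classification is not established.
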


\begin{proof}  Let's denoted by $\mathscr{A}$ the set of equivalence classes of such 2 Calabi-Yau categories, and
$\mathscr{B}$ the set of finite symmetric quivers with even number
of loops at each vertex.

Given such a category $\mathscr{C}$, we associate a quiver $Q$ whose
vertices $\{i\}_{i\in I}$ are in one-to-one correspondence with
$\mathcal {E}=\{E_{i}\}_{i\in I}$, and the number of arrows from $i$
to $j$ is equal to $\dim Ext^{1}(E_{i}, E_{j})$. Since $\mathscr{C}$
is 2 Calabi-Yau, we have $\dim Ext^{1}(E_{i}, E_{j})=\dim
Ext^{1}(E_{i}, E_{j})^{\vee}=\dim Ext^{1}(E_{j}, E_{i})$, so $Q$ is
symmetric. The supersymmetric non-degenerate pairing on
$Ext^{\bullet}(E_{i},E_{i})$ leads to a symplectic pairing on
$Ext^{1}(E_{i},E_{i})$, thus $\dim Ext^{1}(E_{i},E_{i})$ is even,
which means that the number of loops at each vertex is even. This
construction defines a map $\Phi:
\mathscr{A}\rightarrow\mathscr{B}$.

To prove that $\Phi$ is a bijection, we consider $\mathscr{C}$ with
single generator $E$, and quiver $Q$ with single vertex for
simplicity. The general case can be proved in a similar way.

Let $Q$ be a quiver with one vertex and $|J|=2n$ loops, where $J$ is
the set of loops. We will construct a 2 Calabi-Yau category with one
generator $E$, such that $2n=$dim $Ext^{1}(E,E)$. Assuming that such
a category exists, we will find an explicit formula for the
potential on $A=Hom^{\bullet}(E,E)$. Let's consider the graded
vector space
$$Ext^{\bullet}(E,E)[1]=Ext^{0}(E,E)[1]\oplus Ext^{1}(E,E)\oplus Ext^{2}(E,E)[-1]=\textbf{k}[1]\oplus \textbf{k}^{2n}\oplus
\textbf{k}[-1].$$

We introduce graded coordinates on $Ext^{\bullet}(E,E)[1]$:
\begin{itemize}
\item[a)]the coordinate $\alpha$ of degree 1 on $Ext^{0}(E,E)[1]$,
\item[b)]the coordinate $\beta$ of degree $-1$ on $Ext^{2}(E,E)[-1]$,
\item[c)]the coordinates $x_{i}, \xi_{i}, i=1,...,n$ of degree 0 on $Ext^{1}(E,E)=Ext^{1}(E,E)^{\vee}$.
\end{itemize}
The Calabi-Yau structure gives rise to the minimal potential
$W=W(\alpha, x_{i}, \xi_{i}, \beta)$, which is a series of cyclic
words on the space $Ext^{\bullet}(E,E)[1]$. Furthermore, $A$ defines
a non-commutative formal  pointed graded manifold endowed with a
symplectic structure (c.f. \cite{KoSo2}). The potential $W$
satisfies the equation $\{W,W\}=0$, where $\{\bullet,\bullet\}$ is
the corresponding Poisson bracket.

We need to construct the formal series $W$ of degree 1 in cyclic
words on the graded vector space $\textbf{k}[1]\oplus
\textbf{k}^{2n}\oplus \textbf{k}[-1]$, satisfying $\{W,W\}=0$ with
respect to the Poisson bracket
$$\{f,g\}=\sum_{i=1}^{n}[\frac{\partial}{\partial x_{i}},\frac{\partial}{\partial
\xi_{i}}](f,g)+[\frac{\partial}{\partial\alpha},\frac{\partial}{\partial\beta}](f,g).$$

Let $W_{can}=\alpha^{2}\beta+\sum_{i=1}^{n}(\alpha
x_{i}\xi_{i}-\alpha\xi_{i}x_{i})$. This potential makes
$Ext^{\bullet}(E,E)$ into a 2 Calabi-Yau algebra with associative
product and the unit. The multiplications are as follows: the
multiplication of $Ext^{0}(E,E)$ and the other components is scalar
product, and is a non-degenerate bilinear form on the components
$Ext^{1}(E,E)\otimes Ext^{1}(E,E)\rightarrow Ext^{2}(E,E)\simeq
\textbf{k}$.

In addition,
\begin{equation}
\begin{array}{ll}
\{W_{can},W_{can}\}&=\sum\limits_{i=1}^{n}[\frac{\partial
W_{can}}{\partial x_{i}},\frac{\partial W_{can}}{\partial
\xi_{i}}]+[\frac{\partial W_{can}}{\partial\alpha},\frac{\partial
W_{can}}{\partial\beta}]
\\&=\sum\limits_{i=1}^{n}(\xi_{i}\alpha-\alpha\xi_{i})(\alpha
x_{i}-x_{i}\alpha)-(\alpha
x_{i}-x_{i}\alpha)(\xi_{i}\alpha-\alpha\xi_{i})
\\&+(\alpha\beta+\beta\alpha+\sum\limits_{j=1}^{n}(x_{j}\xi_{j}-\xi_{j}x_{j}))\alpha^{2}-\alpha^{2}(\alpha\beta+\beta\alpha+\sum\limits_{k=1}^{n}(x_{k}\xi_{k}-\xi_{k}x_{k}))
\\&=0\nonumber
\end{array}
\end{equation}

The above construction from $Q$ to $\mathscr{C}$ shows that $\Phi$
is a surjection.

Finally, we need to check that $\Phi$ is an injection. The 2
Calabi-Yau algebras we are considering can be thought of as
deformations of the 2 Calabi-Yau algebra
$A_{can}=Ext^{\bullet}(E,E)$ corresponding to the potential
$W_{can}$. The deformation theory of $A_{can}$ is controlled by a
differential graded Lie algebra (DGLA)
$\mathfrak{g}_{can}=\bigoplus_{n\in\mathbb{Z}}\mathfrak{g}_{can}^{n}$,
which is a DG Lie subalgebra of the DGLA
$\widehat{\mathfrak{g}}=\prod_{k\geqslant1}Cycl^{k}(A_{can}[1])^{\vee}=\bigoplus_{n\in\mathbb{Z}}\widehat{\mathfrak{g}}^{n}$.
Here we write $\widehat{\mathfrak{g}}^{n}=\{W| coh.deg W=n\}$, and
$\mathfrak{g}_{can}^{n}=\{W\in\widehat{\mathfrak{g}}^{n}| cyc.deg
W\geqslant n+2\}$, where coh.deg means the cohomological degree of
$W$, and cyc.deg means the number of letters $\alpha, x_{i},
\xi_{i}, \beta, i=1,...,n$ that each term of $W$ contains. In these DGLAs, the
Lie bracket is given by the Poisson bracket and the differential is
given by $d=\{W_{can}, \bullet\}$. The DGLA $\mathfrak{g}_{can}$ is
a DG Lie subalgebra of $\widehat{\mathfrak{g}}$ bdcause of the following reason: $d$ preserves $\mathfrak{g}_{can}$ since it increases
both coh.deg and cyc.deg by 1, and the Poisson bracket restricts to $\mathfrak{g}_{can}$ since for any $W_1\in\mathfrak{g}_{can}^m$ and $W_2\in\mathfrak{g}_{can}^n$ with $cyc.deg W_1=l_1$ and $cyc.deg W_2=l_2$, we have $cyc.deg \{W_1,W_2\}=l_1+l_2-2\geqslant m+2+n+2-2=m+n+2$. As vector spaces,
$\widehat{\mathfrak{g}}=\mathfrak{g}_{can}\bigoplus\mathfrak{g}$,
where $\mathfrak{g}=\bigoplus_{n\in\mathbb{Z}}\mathfrak{g}^{n}$, and
$\mathfrak{g}^{n}=\{W\in\widehat{\mathfrak{g}}^{n}|cyc.deg W<n+2\}$.
For the same reason as $\mathfrak{g}_{can}$, we have that
$\mathfrak{g}$ is also a DG Lie subalgebra of
$\widehat{\mathfrak{g}}$. It follows that $\mathfrak{g}_{can}$ is a
direct summand of the complex $\widehat{\mathfrak{g}}$. The latter
is quasi isomorphic to the cyclic complex
$CC_{\bullet}(A_{can})^{\vee}$. Let $A_{can}^{+}\subset A_{can}$ be
the non-unital $A_{\infty}$-subalgebra consisting of terms of
positive cohomological degree. Then for the cyclic homology,
$HC_{\bullet}(A_{can})\simeq HC_{\bullet}(A_{can}^{+})\bigoplus
HC_{\bullet}(\textbf{k})$. Thus to compute the cohomology of the dual complex
$\widehat{\mathfrak{g}}$, we only need to consider the space of cyclic series in variables $x_{i},
\xi_{i}, \beta, i=1,...,n$ (corresponds to
$HC_{\bullet}(A_{can}^{+})^{\vee}$), and the one in variable
$\alpha$ (corresponds to $HC_{\bullet}(\textbf{k})^{\vee}$). We have
that the series in $\alpha$ don't contribute to the cohomology of
$\mathfrak{g}_{can}$ since they do not belong to $\mathfrak{g}_{can}$.
Moreover, the cohomological degree of series in $x_{i}, \xi_{i},
\beta, i=1,...,n$ is non-positive. Hence
$H^{\geqslant1}(\mathfrak{g}_{can})=0$, which means that the deformation of
$A_{can}$ is trivial. Thus, $\Phi$ is an injection.
\end{proof}

\begin{rem}
Suppose that the 2 Calabi-Yao category over an algebraically closed field is endowed with a stability condition, then a polystable object has formal endomorphism algebra. A special case of the coherent sheaves on a projective K3 surface is proven in \cite{BZ}.
\end{rem}

Address:\\Department of Mathematics, KSU, Manhattan, KS 66506, USA,
\\jren@ksu.edu

\end{document}